\newtheorem{thm}{Theorem}[section]
\newtheorem{lem}[thm]{Lemma}
\theoremstyle{definition}
\theoremstyle{remark}
\numberwithin{equation}{section}
\begin{document}
\title[An improvement of Zalcman's lemma in $C^n$]{An improvement of Zalcman's lemma in $C^n$}%
\author{P.V.Dovbush}%
\address{Institute of Mathematics and Computer Science of Moldova, 5 Academy  Street,  Kishinev, MD-2028,
Republic of Moldova}%
\email{peter.dovbush@gmail.com}%
\subjclass{32A19}%
\keywords{ Zalcman's Lemma; Zalcman-Pang's Lemma; Normal families; Holomorphic functions of several complex variables}%
\begin{abstract}
The aim of this paper is to give a proof of improving of Zalcman's lemma.
\end{abstract}
\maketitle
\section{Introduction and main result}

 A  family  $\mathcal F$  of holomorphic functions  on a  domain  $\Omega\subset C^n$
is normal in $\Omega$ if every sequence of functions  $\{f_j\} \subseteq \mathcal F$  contains either a
subsequence  which  converges  to  a limit function  $f  \ne \infty$ uniformly on each
compact subset of $\Omega,$  or a subsequence which converges uniformly to  $\infty$ on
each  compact subset.
A family $\mathcal F$ is said to be normal at a point $z_0 \in \Omega$ if it is normal in some neighborhood
of $z_0.$ A family  of analytic functions  $\mathcal F$  is normal in a domain
$\Omega$ if and only if $\mathcal F$  is  normal at each  point of $\Omega.$
For every function $\varphi$ of class $C^2(\Omega)$ we define at each point $z\in \Omega$  an hermitian form
$$ L_z(\varphi, v):=\sum_{k,l=1}^n \frac{\partial^2\varphi}{\partial z_k \partial \overline{z}_l}(z) v_k \overline{v}_l,$$
and call it the Levi form of the function $\varphi$ at $z.$
For a holomorphic function $f$ in $\Omega,$ set
\begin{equation}\label{e2}
f^\sharp (z):=\sup_{ |v|=1}\sqrt{L_z(\log(1+|f|^2), v)}
\end{equation}
This quantity is well defined since the Levi form $L_z(\log(1+|f|^2), v)$ is nonnegative for all $z\in \Omega.$
\begin{thm} (Marty's Criterion, see \cite{[DP1]}) \label{MC} A family $\mathcal F$ of functions holomorphic on $\Omega$ is normal
on $\Omega\subset C^n$ if and only if for each compact subset $K\subset \Omega$ there exists a constant $M(K)$
such that at each point $z\in K$
\begin{equation}\label{e1}
f^\sharp (z)\leq M(K)
\end{equation}
 for  all $f\in \mathcal F.$
\end{thm}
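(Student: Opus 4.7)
The plan is to prove both implications of Marty's Criterion by leveraging the chordal-metric interpretation of $f^\sharp$: sufficiency becomes a path-integral estimate plus Arzel\`a-Ascoli, necessity becomes a contradiction extracted from the assumed normality.

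\emph{Sufficiency.} A direct Levi-form computation with $\phi=\log(1+|f|^2)$ (using $f_{\bar z_l}\equiv 0$) gives $\phi_{z_k\bar z_l} = f_{z_k}\overline{f_{z_l}}/(1+|f|^2)^2$, so $\sqrt{L_z(\phi,v)} = |\sum_k f_{z_k}(z)v_k|/(1+|f(z)|^2)$. For a smooth curve $\gamma:[0,1]\to\Omega$, the holomorphy of $f$ forces $g(t):=f(\gamma(t))$ to satisfy $g'(t)=\sum_k f_{z_k}(\gamma(t))\gamma_k'(t)$, and the classical one-variable bound $\chi(g(0),g(1))\leq\int_0^1 |g'(t)|/(1+|g(t)|^2)\,dt$ for the chordal metric $\chi$ yields
\[
\chi\bigl(f(\gamma(0)),f(\gamma(1))\bigr)\;\leq\;\int_0^1 f^\sharp(\gamma(t))\,|\gamma'(t)|\,dt.
\]
Hence $f^\sharp\leq M(K)$ gives chordal equicontinuity of $\mathcal{F}$ on every compact $K$, and Arzel\`a-Ascoli produces a subsequence $f_{j_k}\to f$ uniformly in $\chi$ on compacta, with $f:\Omega\to\widehat{\mathbb{C}}$ continuous. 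I would then argue that $A:=f^{-1}(\infty)$ is both open and closed: closedness is immediate; for openness, near any $z_0\in A$ one has $|f_{j_k}|>1$ eventually, so $h_{j_k}:=1/f_{j_k}$ is holomorphic and bounded there, $h_{j_k}\to 1/f$ locally in the ordinary topology (Montel), and the limit vanishes at $z_0$; since each $f_{j_k}$ is everywhere finite, each $h_{j_k}$ is zero-free, so Hurwitz's theorem in $\mathbb{C}^n$ forces the holomorphic limit to vanish identically near $z_0$, i.e.\ $f\equiv\infty$ on a neighborhood. Connectedness of $\Omega$ then yields the desired holomorphic-or-$\infty$ dichotomy.

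\emph{Necessity.} I argue by contradiction: suppose there is a compact $K$, a sequence $\{f_j\}\subset\mathcal{F}$, and $z_j\in K$ with $f_j^\sharp(z_j)\to\infty$. Passing to subsequences, assume $z_j\to z_0\in K$ and, by normality, that $f_j$ converges uniformly on compacta either to a holomorphic $f$ or to $\infty$. If $f_j\to f$, Cauchy estimates on a polydisc around $z_0$ upgrade uniform convergence of $f_j$ to uniform convergence of its partial derivatives, hence $f_j^\sharp\to f^\sharp$ uniformly near $z_0$; continuity of $f^\sharp$ bounds $f_j^\sharp(z_j)$, contradicting the choice of $z_j$. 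If $f_j\to\infty$, I use that on $\{f\neq 0\}$ the difference $\log(1+|1/f|^2) - \log(1+|f|^2) = -2\log|f|$ is pluriharmonic (as the real part of $-2\log f$ for any local branch), so its Levi form vanishes and $f^\sharp=(1/f)^\sharp$ there; applying the preceding case to $1/f_j\to 0$ again bounds $f_j^\sharp$ near $z_0$, a contradiction.

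The step I expect to be the main obstacle is the Hurwitz-dichotomy in the sufficiency half. Mere chordal equicontinuity allows chordal limits whose pole set is a proper analytic hypersurface, which would only give meromorphic (not holomorphic-or-$\infty$) limits; what rescues the strict dichotomy required by the paper's definition of normality is exactly that each $1/f_j$ is zero-free (because $f_j$ is holomorphic, not merely meromorphic), so the several-variables Hurwitz theorem applies and forces the polar set to be open. Verifying this zero-free Hurwitz step cleanly in $\mathbb{C}^n$ is where most of the technical care will be needed.
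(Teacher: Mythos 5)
The paper itself contains no proof of Theorem \ref{MC}: it is quoted from \cite{[DP1]}, so there is no internal argument to set yours against, and your proposal has to be judged on its own. On that basis it is correct and follows the standard route: sufficiency via the identity $f^\sharp(z)=|Df(z)|/(1+|f(z)|^2)$, the spherical-length estimate along curves giving chordal equicontinuity, Arzel\`a--Ascoli, and the open-closed argument with the several-variables Hurwitz theorem (exactly the paper's Theorem \ref{[HT]}) to force the limit to be either holomorphic or identically $\infty$; necessity via Cauchy estimates in the finite-limit case and the identity $f^\sharp=(1/f)^\sharp$ off the zero set in the $f_j\to\infty$ case. Two routine points are glossed over and deserve a sentence each in a full write-up: (i) the segment joining two points of a compact $K$ need not lie in $\Omega$, so the equicontinuity estimate should be run on small balls compactly contained in $\Omega$ (local equicontinuity is all that Arzel\`a--Ascoli requires); (ii) when $f^{-1}(\infty)=\emptyset$ you still owe the observation that chordal uniform convergence to a finite continuous limit is, by local boundedness, Euclidean uniform convergence on compacts, so the limit is holomorphic and the convergence matches the paper's definition of normality. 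Your diagnosis that the zero-free Hurwitz step is the crux of the dichotomy is exactly right, and it is the same mechanism this paper invokes elsewhere (in the proof of Theorem \ref{ZLP01}).
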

 Marty's criterion is one of the more important results in function theory  widely used for determining the
normality of a family of holomorphic functions. Marty's criterion is one of the main ingredients of
the proof of Zalcman's lemma \cite{[DP1]}. We prove the following improved result of Zalcman-Pang's \cite{[DZP]}.
\begin{thm}  \label{[HZ]} Let  $\mathcal F$ be a family of functions holomorphic on $\Omega\subset C^n.$ Then $\mathcal F$
is not normal at some point $z_0\in \Omega$ if and only if for each $\alpha \in (-1, \infty) $  there exist sequences $f_j\in \mathcal F,$ $z_j\to z_0,$ $r_j\to 0,$ such that the sequence
$$g_j(z):=r_j^\alpha f_j(z_j+r_j z)$$
 converges locally uniformly in $C^n$ to a non-constant entire function $g$ satisfying $g^\sharp(z)\leq g^\sharp(0)=1.$
 \end{thm}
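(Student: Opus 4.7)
For the \textbf{sufficiency} direction, I assume the rescaled sequences exist and argue by contradiction using Theorem~\ref{MC}. Normality of $\mathcal F$ at $z_0$ would produce a compact $K\subset\Omega$ containing all $z_j$ for large $j$ and a constant $M$ with $f_j^\sharp\le M$ on $K$. A chain-rule computation based on~\eqref{e2} yields
\begin{equation*}
g_j^\sharp(0) \;=\; \frac{r_j^{\alpha+1}\,(1+|f_j(z_j)|^2)\,f_j^\sharp(z_j)}{1+r_j^{2\alpha}\,|f_j(z_j)|^2}.
\end{equation*}
Splitting by whether a subsequence of $\{f_j\}$ tends to a finite limit on $K$ or diverges to $\infty$ (in the latter case one passes to $1/f_j$, whose Marty quantity coincides with $f_j^\sharp$), the hypothesis $\alpha>-1$ combined with $r_j\to 0$ forces $g_j^\sharp(0)\to 0$ in every subcase, contradicting $g^\sharp(0)=1$.

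The \textbf{necessity} direction is the substantive part. Theorem~\ref{MC} together with a diagonal argument over shrinking balls about $z_0$ yields $f_j\in\mathcal F$ and $w_j\to z_0$ with $f_j^\sharp(w_j)\to\infty$. One must then refine the centers $z_j$ and radii $r_j$ so that $g_j(z)=r_j^\alpha f_j(z_j+r_jz)$ is simultaneously normalized, $g_j^\sharp(0)\to 1$, and uniformly controlled, $g_j^\sharp(z)\le 1+o(1)$ on balls $\|z\|\le R_j$ with $R_j\to\infty$. Guided by the formula above, I would work with the two-parameter quantity
\begin{equation*}
\Phi_j(z,\rho) \;=\; \frac{\rho^{\alpha+1}\,(1+|f_j(z)|^2)\,f_j^\sharp(z)}{1+\rho^{2\alpha}\,|f_j(z)|^2},
\end{equation*}
which is exactly $g_j^\sharp(0)$ for a rescaling centered at $z$ with radius $\rho$, and select $(z_j,r_j)$ as a near-extremizer with $\Phi_j(z_j,r_j)=1$ via a Bloch-type renormalization on an exhaustion of $\Omega$ by compact balls about $z_0$. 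The condition $\alpha>-1$ is exactly what makes $\rho^{\alpha+1}\to 0$ as $\rho\to 0^+$, ensuring that the equation $\Phi_j(z_j,r_j)=1$ admits solutions with $r_j\to 0$. With such $(z_j,r_j)$ in hand, Theorem~\ref{MC} applied in reverse on an exhaustion of $C^n$ produces a locally uniformly convergent subsequence whose limit $g$ is entire and satisfies $g^\sharp\le g^\sharp(0)=1$; the normalization $g_j^\sharp(0)\to 1\neq 0$ rules out convergence to $\infty$, and $g^\sharp(0)=1$ ensures $g$ is non-constant.

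The step I expect to be \emph{most delicate} is this simultaneous selection of $(z_j,r_j)$. The classical Zalcman choice (maximizing $f_j^\sharp$ on a shrinking ball centered at $z_0$) handles $\alpha=0$, but for general $\alpha\in(-1,\infty)$ the $\rho$-dependence in the denominator of $\Phi_j$ couples the two parameters nontrivially, so the one-parameter Bloch--Zalcman renormalization must be adapted to a joint $(z,\rho)$-extremization that yields both the normalization and the uniform inequality on expanding rescaled balls. Verifying that this adapted extremal choice works for every $\alpha>-1$, rather than only a bounded subrange, is precisely where the improvement over the prior Zalcman--Pang result lies.
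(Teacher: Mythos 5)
Your outline reproduces the overall architecture of the paper's proof (Marty's criterion in both directions, the Marty quantity of the rescaled map at the origin as the normalizing functional, then Marty plus a diagonal argument to extract the entire limit), but it leaves a genuine gap exactly at the step you flag as delicate: the simultaneous selection of $(z_j,r_j)$. In the paper this is the content of Lemma \ref{LP}, and it is not a routine extremization of your unweighted functional $\Phi_j(z,\rho)$. Solving $\Phi_j(z_j,r_j)=1$ by continuity in $\rho$ (which $\alpha>-1$ indeed permits) gives the normalization $g_j^\sharp(0)=1$, but it gives neither a domain of definition growing to all of $C^n$ nor the uniform bound $g_j^\sharp(z)\le 1+o(1)$ on balls $|z|\le R_j\to\infty$: for that one must compare the functional at the shifted points $z_j+r_jz$ with its value at $z_j$, and this is achieved in the paper by inserting the Zalcman--Pang boundary weight, i.e.\ replacing $\rho$ by $(1-j|z|)\rho$ so that the quantity becomes
$$\varphi_j(\rho,z)=\frac{(1-j|z|)^{1+\alpha}\rho^{1+\alpha}(1+|f_j(z)|^2)f_j^\sharp(z)}{1+(1-j|z|)^{2\alpha}\rho^{2\alpha}|f_j(z)|^2},$$
and then proving (via the iterative renormalization $\rho_1\rho_2\cdots\rho_m$ and an exchange of limit and maximum) that there exist an \emph{interior} point $\xi_j^*$ and $\rho_j\in(0,1)$ at which the \emph{maximum} of $\varphi_j(\rho_j,\cdot)$ over $|z|\le 1/j$ equals exactly $1$. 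The maximality at $(\xi_j^*,\rho_j)$, together with the weight, is what controls the ratio $(1-j|\xi_j^*|)/(1-j|\xi_j^*+r_jz|)$ and yields $h_j^\sharp(z)\le\bigl(1+\mathrm{sgn}(\alpha)/j\bigr)^{2\alpha}\bigl(1-1/j\bigr)^{-(1+\alpha)}$ for $|z|<j$, hence normality of the rescaled sequence. Naming this step a ``Bloch-type renormalization'' does not perform it; as written, your necessity argument is a plan, not a proof.

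There is also a concrete flaw in your sufficiency argument. In the subcase where $|f_j(z_j)|\to\infty$, the only bound your computation (or the passage to $1/f_j$) yields from $f_j^\sharp\le M$ is of the order $g_j^\sharp(0)\le 2M\,r_j^{1-\alpha}$, which does \emph{not} tend to $0$ when $\alpha>1$, so the claim that ``every subcase forces $g_j^\sharp(0)\to0$'' is unjustified. The fix is the paper's: the hypothesis provides sequences for \emph{each} $\alpha$, so specialize to $\alpha=0$, where Marty alone gives $g_j^\sharp(z)=\rho_j f_j^\sharp(z_j+\rho_j z)\le M\rho_j\to0$, contradicting $g^\sharp(0)=1$. (If you insist on arguing for a fixed $\alpha\ne 0$, you need the normal-limit dichotomy with Cauchy estimates --- in the finite-limit case $Dg_j=r_j^{\alpha+1}Df_j\to0$ forces a constant limit, and in the divergent case $|g_j|\to\infty$ locally uniformly --- rather than the Marty-quantity estimate you propose.)
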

In case $n=1$ this theorem was proved in Hua \cite[Lemma 6]{[HUA2]}. A similar result was proved by Chen and Gu \cite[Th.2]{[CHGU]} (see also Xue and Pung \cite{[XUPA]}, cf. Hua \cite{[HUA2]}). The special case $\alpha=0$ of Theorem \ref{[HZ]} was proved in  Zalcman
\cite[p. 814]{[ZL]} and is known as  Zalcman's rescaling lemma.  Zalcman's lemma - now upgraded to the status of theorem - was first stated at
 \cite{[ZL]}; for a state-of-the-art version, see \cite[Lemma 2]{[ZL1]}.

The plan of this paper is as follows. In Section \ref{s2}, we state and prove a number
of auxiliary results, some of which are of independent interest. In Section \ref{s3}, we
give the proof of main theorem. In Section \ref{s4}, we give two applications of  main theorem.

\section{Auxiliary results}\label{s2}
In this section, we state some known results and prove a lemma that
is required in the proofs of our results.
\begin{thm} (Hurwitz's theorem, see, e.g.  \cite[(1.5.16) Lemma, p. 24] {[NO]}) \label{[HT]} Let $\Omega$ be a domain of $C^n$ and $\{h_j\}$
a sequence of non-vanishing holomorphic
functions $h_j$ which converges uniformly on compact subsets
to a holomorphic function $h$ on $\Omega.$ Then $h$ vanishes either everywhere or nowhere.
\end{thm}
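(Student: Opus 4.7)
The plan is to reduce the several-variables statement to the classical one-variable Hurwitz theorem via restriction to complex lines. Without loss of generality I may assume $\Omega$ is connected (otherwise argue on each connected component separately), and I aim to prove the contrapositive: if $h$ is not identically zero on $\Omega$, then $h$ has no zeros in $\Omega$.

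Suppose for contradiction that $h \not\equiv 0$ yet $h(z_0) = 0$ for some $z_0 \in \Omega$. First I would invoke the identity principle for holomorphic functions of several complex variables: since $h$ is holomorphic on the connected open set $\Omega$ and not identically zero, $h$ cannot vanish on any nonempty open subset of $\Omega$. In particular, in every neighborhood of $z_0$ there is a point at which $h$ is nonzero, so I can choose a unit vector $v \in C^n$ and a radius $r > 0$ small enough that the closed complex disk
$$D_v := \{z_0 + \zeta v : |\zeta| \le r\}$$
is contained in $\Omega$ and the slice function $\varphi(\zeta) := h(z_0 + \zeta v)$ is not identically zero on $|\zeta| \le r$.

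Next I would restrict the sequence: set $\varphi_j(\zeta) := h_j(z_0 + \zeta v)$. Each $\varphi_j$ is a non-vanishing holomorphic function on a neighborhood of the closed disk $\{|\zeta| \le r\}$ in $\mathbb{C}$, because $h_j$ has no zeros in $\Omega \supset D_v$. Since $h_j \to h$ uniformly on the compact set $D_v$, we have $\varphi_j \to \varphi$ uniformly on $\{|\zeta| \le r\}$. Now apply the classical one-variable Hurwitz theorem to $\{\varphi_j\}$: the limit $\varphi$ must be either identically zero or nowhere zero on the disk. By construction $\varphi \not\equiv 0$, yet $\varphi(0) = h(z_0) = 0$, a contradiction.

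The only subtle step is the selection of the direction $v$, and it is handled cleanly by the SCV identity principle; the rest is the standard Hurwitz argument applied in one variable along the line. No estimate is needed beyond uniform convergence on the compact disk $D_v$, so I do not anticipate any further obstacle.
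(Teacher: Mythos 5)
The paper does not prove this statement at all: it is quoted as a known result with a citation to Noguchi--Ochiai \cite[(1.5.16) Lemma, p.~24]{[NO]}, so there is no in-paper argument to compare against. Your proof is correct and self-contained. The reduction to one variable is the standard route: since a domain is by definition connected, the identity principle guarantees that if $h\not\equiv 0$ then $h$ is nonzero at some point $w$ arbitrarily close to the hypothetical zero $z_0$; taking $v=(w-z_0)/|w-z_0|$ and $r$ with $|w-z_0|<r$ small enough that the closed disk $D_v$ lies in $\Omega$ makes the slice $\varphi(\zeta)=h(z_0+\zeta v)$ not identically zero on $|\zeta|<r$ while $\varphi(0)=0$, and the restricted sequence $\varphi_j$ is zero-free and converges uniformly on the closed disk, so the classical one-variable Hurwitz theorem gives the contradiction. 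The only point worth tightening in a written version is the selection of $v$ and $r$: you should make explicit that the nonzero point $w$ is chosen \emph{inside} the disk of radius $r$ (e.g.\ $w\in B(z_0,R/2)$ and $r=R/2$ for a ball $B(z_0,R)\subset\Omega$), so that $\varphi$ is already nonconstant-zero on the \emph{open} disk where Hurwitz is applied; as phrased, ``$r$ small enough'' slightly obscures the fact that $r$ must also be large enough to capture $w$. What your argument buys over the paper's citation is an elementary, complete verification requiring only the several-variables identity principle and the one-variable argument-principle proof of Hurwitz, with no appeal to the machinery of \cite{[NO]}.
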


Note that  $$L_z(\log(1+|f(z)|^2), v)=\frac{|(Df(z),v)|^2}{(1+|f(z)|^2)^2}$$ on $\Omega.$
 Appealing to the Cauchy-Schwarz inequality  it is easy to show that $$(1+|f(z)|^2)f^\sharp (z)=|Df(z)|.$$

The following lemma  will play a crucial role in the proof of  Theorem \ref{[HZ]}.
\begin{lem} \label{LP} Let $f$ be a holomorphic function on the closed unit ball $\overline{B(0,1)},$
and $\alpha$ be a real number with $-1<\alpha<\infty.$ Suppose
$$ \max_{|z|\leq 1/j}{\frac{(1-j|z|)^{1+\alpha}(1+|f(z)|^2)f^\sharp(z)}{1+(1-j|z|)^{2\alpha}|f(z)|^2}}>1.$$
Then there exists a point $\xi^*,$ $|\xi^*|<1/j,$ and a real number $\rho,$ $0<\rho<1,$ such that
$$\max_{|z|\leq 1/j}{\frac{(1-j|z|)^{1+\alpha}\rho^{1+\alpha}(1+|f(z)|^2)f^\sharp(z)}{1+(1-j|z|)^{2\alpha}\rho^{2\alpha}|f(z)|^2}}=$$
 $$\frac{(1-j|\xi^*|)^{1+\alpha}\rho^{1+\alpha}(1+|f(\xi^*)|^2)f^\sharp(\xi^*)}{1+(1-j|\xi^*|)^{2\alpha}\rho^{2\alpha}|f(\xi^*)|^2}=1.$$
\end{lem}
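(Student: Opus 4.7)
The plan is to introduce an auxiliary scaling parameter $\rho$ and apply the intermediate value theorem. Set
$$F(z,\rho) := \frac{(1-j|z|)^{1+\alpha}\rho^{1+\alpha}(1+|f(z)|^2)f^\sharp(z)}{1+(1-j|z|)^{2\alpha}\rho^{2\alpha}|f(z)|^2}$$
on $\{|z|\leq 1/j\}\times(0,1]$, and let $M(\rho):=\max_{|z|\leq 1/j} F(z,\rho)$. The hypothesis says $M(1)>1$, and the conclusion is precisely that $M(\rho)=1$ for some $\rho\in(0,1)$, with the maximum attained at an interior point $\xi^*$.

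The key elementary observation is that the denominator of $F$ is bounded below by $1$, so, using the identity $(1+|f|^2)f^\sharp=|Df|$ recorded just before the lemma,
$$0 \leq F(z,\rho) \leq (1-j|z|)^{1+\alpha}\rho^{1+\alpha}|Df(z)|.$$
Since $1+\alpha>0$ and $0\leq 1-j|z|\leq 1$, the factor $(1-j|z|)^{1+\alpha}$ is bounded by $1$ and tends to $0$ as $|z|\to 1/j$, while $|Df|$ is continuous on the compact set $\overline{B(0,1)}$, hence bounded by some constant $C$. Therefore $F$ extends continuously to $\{|z|\leq 1/j\}\times(0,1]$ by setting $F\equiv 0$ on $\{|z|=1/j\}$, and satisfies the uniform estimate $F(z,\rho)\leq C\rho^{1+\alpha}$.

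The remainder is routine. Joint continuity of $F$ on the compact $z$-slice gives continuity of $\rho\mapsto M(\rho)$ on $(0,1]$; the uniform bound yields $M(\rho)\leq C\rho^{1+\alpha}\to 0$ as $\rho\to 0^+$; and by hypothesis $M(1)>1$. The intermediate value theorem produces $\rho\in(0,1)$ with $M(\rho)=1$, attained at some $\xi^*$, and because $F\equiv 0$ on $\{|z|=1/j\}$, the equality $F(\xi^*,\rho)=1>0$ forces $|\xi^*|<1/j$, which is exactly the conclusion.

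The one point to watch is that for $-1<\alpha<0$ both $(1-j|z|)^{2\alpha}$ and $\rho^{2\alpha}$ blow up at the boundary and as $\rho\to 0^+$, so a direct analysis of the ratio would be awkward; but since these factors enter only additively to $+1$ in the denominator, the crude majorization $F\leq\text{numerator}$ absorbs the entire difficulty and is uniform in $z$, which is what makes the continuity and vanishing as $\rho\to 0^+$ painless.
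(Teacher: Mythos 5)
Your proof is correct, and it takes a genuinely different and noticeably simpler route than the paper. The paper fixes the maximizer $z_1^*$ at $t=1$, uses the intermediate value theorem in $t$ along that single point to get $\rho_1$, and then iterates: it constructs sequences $\rho_m\in(0,1)$ and maximizers $z_m^*$ with $\max_z\varphi(\rho_1\cdots\rho_m,z)>1$ and $\varphi(\rho_1\cdots\rho_m\rho_{m+1},z_m^*)=1$, passes to the limit $\rho=\lim_m\rho_1\cdots\rho_m$ and $\xi^*=\lim_m z_m^*$ along a subsequence, and then must justify interchanging the limit in $m$ with the maximum over $z$ by a separate Bolzano--Weierstrass argument. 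You instead apply the intermediate value theorem once, to the function $M(\rho)=\max_{|z|\le 1/j}F(z,\rho)$: joint continuity of $F$ (after the continuous extension by $0$ on $|z|=1/j$, which is needed exactly when $-1<\alpha<0$, and which you justify by the squeeze $0\le F\le(1-j|z|)^{1+\alpha}\rho^{1+\alpha}|Df|$) gives continuity of $M$, the uniform bound $M(\rho)\le C\rho^{1+\alpha}$ gives $M(\rho)\to 0$ as $\rho\to 0^+$, and $M(1)>1$ is the hypothesis; the vanishing of $F$ on the sphere $|z|=1/j$ then forces the maximizer to be interior. Both arguments rest on the same elementary estimate (denominator $\ge 1$, numerator at most $C\rho^{1+\alpha}$, with $1+\alpha>0$; note the paper's displayed version of this bound carries a typo, $1-\alpha$ in place of $1+\alpha$), but your one-shot argument on $M(\rho)$ eliminates the infinite iterative scheme and the limit--maximum interchange entirely, and it makes the interiority of $\xi^*$ transparent rather than extracting it from the quantitative lower bound on $\rho_1\cdots\rho_m$.
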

\begin{proof} 
Set
\begin{equation}\label{equ}
\varphi(t,z):={\frac{(1-j|z|)^{1+\alpha}\rho^{1+\alpha}(1+|f(z)|^2)f^\sharp(z)}{1+(1-j|z|)^{2\alpha}\rho^{2\alpha}|f(z)|^2}}.
\end{equation}

Suppose  that $\varphi(1,z_1^*):=\max_{|z|\leq 1/j}\varphi(1,z)>1.$ Since $(1+|f(z)|^2)f^\sharp(z)$ is bounded on $|z|\leq 1/j $ we have
\begin{equation}\label{equ}
	\varphi(t,z)\leq (1-j|z|)^{1-\alpha}t^{1-\alpha} M.
\end{equation}
It follows $ \varphi(t,z)$ is continuous on $[0,1]\times\{z\in C^n : |z|\leq 1/j\}$
and $\varphi(0,z)=0$ on $\{z\in C^n : |z|\leq 1/j\}.$

Hence $\varphi(0,z_1^*)=0$ and $\varphi(1,z_1^*)>1.$
By continuity of $ \varphi(t,z)$ on $[0,1]\times\{z\in C^n : |z|\leq 1/j\},$ there exists  $\rho_1,$ $0<\rho_1<1,$ such that
$\varphi(\rho_1,z_1^*)=1.$

Repeating this procedure we
can find $\rho_m,$ $0<\rho_m<1,$ and $z_m^*,$  $ |z_m^*|<1/j,$ such that
\begin{multline}\label{equ11}
	\max_{|z|\leq 1/j}\varphi(\rho_1\ldots\rho_m,z)=\varphi(\rho_1\ldots\rho_m,z_m^*)> 1.\\
	\varphi(\rho_1\ldots\rho_m \rho_{m+1},z_m^*)=1.
\end{multline}
The sequence $\{x_m:=\rho_1\ldots\rho_m\}$ is a bounded and decreasing sequence. Then the greatest lower bound
of the set $\{x_m : m \in N\},$ say $\rho,$ is the limit of $\{x_m\}.$ The sequence  $\{z_m^*\}$ contains a subsequence, again denoted by $\{z_m^*\},$
such that $\lim_{m\to \infty}z_m^*=\xi^*.$ From (\ref{equ}) follows that $0<\rho<1$ and $|\xi^*|<1/j.$

We claim that
\begin{equation}\label{maxi}
	\max_{|z|\leq 1/j}\lim_{m\to\infty}\varphi(\rho_1\ldots\rho_m,z)=\lim_{m\to\infty}\max_{|z|\leq 1/j}\varphi(\rho_1\ldots\rho_m,z).
\end{equation}
Since $\varphi$ is continuous function on $[0,1]\times  \overline{B(0, 1/j)}$  by the Weierstrass theorem
(see \cite[Theorem (Weierstrass) p. 565]{[BEK]}) we can find $|\eta|<1/j$ and $|w_m|<1/j$ such that
   \begin{equation}\label{equ1+}
   \max_{|z|\leq 1/j}\lim_{m\to\infty}\varphi(\rho_1\ldots\rho_m,z)=\max_{|z|\leq 1/j}\varphi(\rho,z)=\varphi(\rho,\eta) ;
\end{equation}
  \begin{equation}\label{equ2}
  \varphi(\rho_1\ldots\rho_m,\eta)\leq \max_{|z|\leq 1/j}\varphi(\rho_1\ldots\rho_m,z)=\varphi(\rho_1\ldots\rho_m,w_m), \ \ m=1,2, \ldots.
\end{equation}
By the Bolzano-Weierstrass
theorem there is an infinite subsequence of $\{ w_m\},$ again denoted by $\{ w_m\},$
and $\varsigma,$ $|\varsigma|\leq
1/j ,$  such that $w_m\to \varsigma$ as $m\to\infty.$ Because $w_m\to \varsigma$ and $\rho_1\ldots\rho_m\to \rho$ as
$m\to \infty$ and  $\varphi$ is continuous function on $[0,1]\times  \overline{B(0, 1/j)}$ from (\ref{equ1+}) and (\ref{equ2}) we see
  $$\varphi(\rho,\eta)\leq \lim_{m\to\infty}\max_{|z|\leq 1/j}\varphi(\rho_1\ldots\rho_m,z)=\varphi(\rho,\varsigma)\leq$$
$$\max_{|z|\leq 1/j}\varphi(\rho,z)=\max_{|z|\leq 1/j}\lim_{m\to\infty}\varphi(\rho_1\ldots\rho_m,z)=\varphi(\rho,\eta).$$
That is, the claim (\ref{maxi}) is proved.
Combining (\ref{maxi}) and (\ref{equ1+}) we obtain
$$\max_{|z|\leq 1/j}\varphi(\rho,z)=\varphi(\rho,\xi^*)=1 \ \ (|\xi^*|<1/j).$$
The proof of the lemma is complete.
\end{proof}

\section{Proof of  main theorem}\label{s3}

\begin{proof}[Proof of Theorem \ref{[HZ]}]
  $"\Rightarrow"$ To simplify matters we assume that $z_0 =0$ and all functions under
consideration are holomorphic on the closed unit ball $\overline{B(0, 1)}.$ By Marty's criterion (Theorem \ref{MC})
$\mathcal F$ contains functions $f_j,$ $j\in N,$ satisfying
$\max_{|z|<1/(2j)} f_j^\sharp(z)>2^{1+|\alpha|}j^{3(1+|\alpha|)}.$
Since $1-j|z|>1/2$ if $|z|<1/(2j)$  there exists a $\xi_j$ with $|\xi_j|<1/j$  such that
$$
	\max_{|z|\leq 1/j}(1-j|z|)^{1+|\alpha|}f_j^\sharp(z)=(1-j|\xi_j|)^{1+|\alpha|}f_j^\sharp(\xi_j)\geq $$$$\max_{|z|\leq 1/2j}(1-j|z|)^{1+|\alpha|}f_j^\sharp(z)\geq j^{3(1+|\alpha|)}.
$$
The power function $t^{2\alpha}, t>0,$ is continuous, monotone (increasing when $\alpha>0,$ decreasing when $\alpha<0$), hence
$$(1-j|z|)^{2\alpha}(1+|f(z)|^2)\geq 1+(1-j|z|)^{2\alpha}|f(z)|^2 \ \ (-1<\alpha\leq 0 \ \  \textrm{ arbitrary})$$
and
$$1+(1-j|z|)^{2\alpha}|f(z)|^2 \leq  [1+|f(z)|^2]\ \ (0<\alpha<\infty \ \ \textrm{ arbitrary})$$
 we have
\begin{equation} \label{pa11} \frac{(1-j|\xi_j|)^{1+\alpha}(1+|f_j(\xi_j)|^2)f_j^\sharp(\xi_j)}{1+(1-j|\xi_j|)^{2\alpha}|f_j(\xi_j)|^2}
> (1-j|\xi_j|)^{1+|\alpha|}f_j^\sharp(\xi_j)> j^{3(1+|\alpha|)}.
\end{equation}
Hence
$$ \max_{|z|\leq 1/j}{\frac{(1-j|z|)^{1+\alpha}(1+|f_j(z)|^2)f_j^\sharp(z)}{1+(1-j|z|)^{2\alpha}|f_j(z)|^2}}
>1.$$
According to Lemma \ref{LP}, there exists $\xi_j^*,$ $|\xi_j^*|<1/j,$ and $\rho_j,$ $0<\rho_j<1,$ such that
$$
	\max_{|z|\leq 1/j}{\frac{(1-j|z|)^{1+\alpha}\rho_j^{1+\alpha}(1+|f_j(z)|^2)f_j^\sharp(z)}{1+(1-j|z|)^{2\alpha}\rho_j^{2\alpha}|f_j(z)|^2}}=$$ $$\frac{(1-j|\xi_j^*|)^{1+\alpha}\rho_j^{1+\alpha}(1+|f_j(\xi_j^*)|^2)f_j^\sharp(\xi_j^*)}{1+(1-j|\xi_j^*|)^{2\alpha}\rho_j^{2\alpha}|f_j(\xi_j^*)|^2}=1.
$$
Therefore inequality (\ref{pa11}) shows that
$$
1=\frac{(1-j|\xi_j^*|)^{1+\alpha}\rho_j^{1+\alpha}(1+|f_j(\xi_j^*)|^2)f_j^\sharp(\xi_j^*)}{1+(1-j|\xi_j^*|)^{2\alpha}\rho_j^{2\alpha}|f_j(\xi_j^*)|^2}\geq $$
$$ \frac{(1-j|\xi_j|)^{1+\alpha}\rho_j^{1+\alpha}(1+|f_j(\xi_j)|^2)f_j^\sharp(\xi_j)}{1+(1-j|\xi_j|)^{2\alpha}\rho_j^{2\alpha}|f_j(\xi_j)|^2}\geq$$
$$
 (1-j|\xi_j|)^{1+|\alpha|}\rho_j^{1+|\alpha|}f_j^\sharp(\xi_j)\geq\rho_j^{1+|\alpha|}j^{3(1+|\alpha|)} \ \ (|\xi_j|<1/j).	
$$
It follows
\begin{equation}\label{pa111}
	\Big(\frac{1}{j}\Big)^{3}\geq\rho_j\to 0.
\end{equation}
Put
$$r_j=(1-j|\xi_j^*|)\rho_j\to 0.$$
Set $$h_j(z)= r_j^{\alpha}f_j(\xi_j^*+ r_jz).$$
We claim that appropriately chosen
subsequences $z_k =\xi_{j_k},$ $\rho_k=r_{j_k},$ and $g_k =h_{j_k}$ will do.
First of all, $h_j(z)$ is defined on $|z|<\frac{1}{j\rho_j},$ hence on $|z|<j,$ since
$$ |\xi_j^*+r_jz|\leq |\xi_j^*|+r_j|z|\leq|\xi_j^*|+r_j\frac{1-j|\xi_j^*|}{jr_j}=\frac{1}{j}.$$
By the invariance of the Levi form under  biholomorphic mappings, we have
$$L_{z}(\log(1+|h_j|^2),v)=L_{\xi_j^*+r_jz}(\log(1+|h_j|^2),r_jv)$$
and hence $$h_j^\sharp (z)=r_jh_j^\sharp (\xi_j^*+r_jz).$$
Since $r_j=(1-j|\xi_j^*|)\rho_j$ a simple
computations shows that
 $$h_j^\sharp (z)=\frac{r_jr_j^{\alpha}(1+|f_j(\xi_j^*+r_jz)|^2)f_j^\sharp (\xi_j^*+r_jz)}{1+r_j^{2\alpha}|f_j(\xi_j^*+r_jz)|^2}=
$$
$$\frac{(1-j|\xi_j^*|)^{1+\alpha}\rho_j^{1+\alpha}(1+|f_j(\xi_j^*+r_jz)|^2)f_j^\sharp (\xi_j^*+r_jz)}{1+[(1-j|\xi_j^*|)/(1-j|\xi_j^*+r_jz|)]^{2\alpha}(1-j|\xi_j^*+r_jz|)^{2\alpha}\rho_j^{2\alpha}|f_j(\xi_j^*+r_jz)|^2}=$$
$$\frac{(1-j|\xi_j^*|)^{1+\alpha}}{(1-j|\xi_j^*+r_jz|)^{1+\alpha}}\cdot \frac{(1-j|\xi_j^*+r_jz|)^{1+\alpha}\rho_j^{1+\alpha}(1+|f_j(\xi_j^*+r_jz)|^2)f_j^\sharp (\xi_j^*+r_jz)}{\Big[1+\Big(\frac{1-j|\xi_j^*|}{1-j|\xi_j^*+r_jz|}\Big)^{2\alpha}\cdot(1-j|\xi_j^*+r_jz|)^{2\alpha}\rho_j^{2\alpha}|f_j(\xi_j^*+r_jz)|^2\Big]}.$$
Bearing in mind Lemma \ref{LP} it is easy to see that $h_j^\sharp (0)=1.$ Since

 $$\frac{1}{1+1/j}\leq\frac{1-j|\xi_j^*|}{1-j|\xi_j^*+r_jz|}\leq \frac{1}{1-1/j} $$
we have

$$
1+\Big(\frac{1-j|\xi_j^*|}{1-j|\xi_j^*+r_jz|}\Big)^{2\alpha}\cdot(1-j|\xi_j^*+r_jz|)^{2\alpha}\rho_j^{2\alpha}|f_j(\xi_j^*+r_jz)|^2 \geq$$
$$\Big(\frac{1}{1-1/j}\Big)^{2\alpha}\cdot\Big[1+(1-j|\xi_j^*+r_jz|)^{2\alpha}\rho_j^{2\alpha}|f_j(\xi_j^*+r_jz)|^2\Big] \ \ (-1<\alpha\leq 0 \ \  \textrm{ arbitrary}) $$
and

$$
1+\Big(\frac{1-j|\xi_j^*|}{1-j|\xi_j^*+r_jz|}\Big)^{2\alpha}\cdot(1-j|\xi_j^*+r_jz|)^{2\alpha}\rho_j^{2\alpha}|f_j(\xi_j^*+r_jz)|^2 \geq$$
$$\Big(\frac{1}{1+1/j}\Big)^{2\alpha}\cdot\Big[1+(1-j|\xi_j^*+r_jz|)^{2\alpha}\rho_j^{2\alpha}|f_j(\xi_j^*+r_jz)|^2\Big] \ \ (0<\alpha< \infty \ \  \textrm{ arbitrary}). $$

 From the above inequalities and Lemma \ref{LP} we infer that\footnote{) sgn denotes the signum function (i.e., $sgn(0)=0,$ $sgn(\alpha)=1$  if $\alpha>0$  and $-1$ if $\alpha<0$).})
$$h_j^\sharp (z)$$
$$\leq\Big(1+\frac{sgn(\alpha)}{j}\Big)^{2\alpha}\cdot\Big(\frac{1-|\xi_j^*|}{1-j|\xi_j^*+r_jz|}\Big)^{1+\alpha}\cdot  \frac{(1-j|\xi_j^*+r_jz|)^{1+\alpha}\rho_j^{1+\alpha}(1+|f_j(\xi_j^*+r_jz)|^2)f_j^\sharp(\xi_j^*+r_jz)}{1+(1-j|\xi_j^*+r_jz|)^{2\alpha}\rho_j^{2\alpha}|f_j(\xi_j^*+r_jz)|^2}
$$
 $$ =\Big(1+\frac{sgn(\alpha)}{j}\Big)^{2\alpha}\cdot\Big(\frac{1-j|\xi_j^*|}{1-j|\xi_j^*+r_jz|}\Big)^{1+\alpha}\cdot 1\leq \Big(1+\frac{sgn(\alpha)}{j}\Big)^{2\alpha}\cdot\Big(\frac{1}{1-1/j}\Big)^{1+\alpha}
 $$ for all $|z|<j.$
For every $m \in N$ the sequence $\{h_j\}_{j>m}$ is normal
in $B(0, m)$ by Marty's criterion (Theorem \ref{MC}). The well-known Cantor diagonal process yields a subsequence
$\{g_k=h_{j_k}\}$ which converges uniformly on every ball $B(0, R).$ The limit function
$g$ satisfies $g^\sharp(z)\leq \limsup_{j\to\infty} h_j^\sharp(z) \leq 1=g^\sharp(0).$ Clearly, $g$ is non-constant because
$g^\sharp(0) \neq 0.$

$"\Leftarrow"$ Take $\alpha=0.$
Suppose that there exist sequences $f_j\in \mathcal F,$ $z_j\to 0,$ $\rho_j\to 0,$
 such that the  sequence $$g_j(z)=f_j(z_j+\rho_j z) $$
 converges locally uniformly in $C^n$ to a non-constant entire function $g$ satisfying $g^\sharp(z)\leq g^\sharp(0)=1,$  but $\mathcal F$
is normal.
By Marty's criterion (Theorem \ref{MC}) there exists a constant $M > 0$ such that
$$\max_{|z|\leq 1/2} f_j^\sharp(z)<M$$ for all $j.$
Since $z_j\to  0,$ $\rho_j\to 0,$ then for $|z|<1/2$ and $j$  sufficiently large, we have
$$|z_j+\rho_j z| \leq |z_j|+\rho_j |z|\leq |z_j|+\rho_j/2 <1/2.$$ Thus
$$g_j^\sharp(z)=f_j^\sharp(z_j+\rho_jz)\rho_j\leq M\rho_j\to 0 \ \ (|z|<1/2).$$
This implies that $g^\sharp(0)=0,$ which is a contradiction to $g^\sharp(0)=1.$
\end{proof}
\section{Applications}\label{s4}
Let us illustrate the use of improved Zalcman's lemma  by showing first how it can be used to derived the following theorem.
\begin{thm} \label{suf} Let some $\varepsilon > 0$ be given and set
$$\mathcal F=\{f \textrm{ holomorphic in }\Omega : f^\sharp(z)> \varepsilon \textrm{ for all } z \in \Omega \}.$$
Then $\mathcal F$ is normal in $\Omega.$
\end{thm}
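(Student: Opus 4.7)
The plan is to argue by contradiction using the improved Zalcman lemma (Theorem \ref{[HZ]}) with a sufficiently large exponent $\alpha$. Suppose $\mathcal{F}$ is not normal at some $z_0\in\Omega$. Fix any $\alpha>1$ (for concreteness, $\alpha=2$) and apply Theorem \ref{[HZ]} to obtain sequences $f_j\in\mathcal{F}$, $z_j\to z_0$, $r_j\to 0$, such that $g_j(z):=r_j^{\alpha}f_j(z_j+r_jz)$ converges locally uniformly on $C^n$ to a non-constant entire function $g$ satisfying $g^\sharp(z)\leq g^\sharp(0)=1$.

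The next step is to express $g_j^\sharp$ in terms of $f_j^\sharp$ so that the hypothesis $f_j^\sharp>\varepsilon$ can be invoked. Using the identity $(1+|f|^2)f^\sharp=|Df|$ noted in Section \ref{s2}, the chain rule $|Dg_j(z)|=r_j^{\alpha+1}|Df_j(z_j+r_jz)|$, and the rescaling relation $|f_j(z_j+r_jz)|^2=r_j^{-2\alpha}|g_j(z)|^2$, a direct calculation yields
$$g_j^\sharp(z)=r_j^{\alpha+1}f_j^\sharp(z_j+r_jz)\cdot\frac{1+r_j^{-2\alpha}|g_j(z)|^2}{1+|g_j(z)|^2}>\varepsilon\cdot\frac{r_j^{\alpha+1}+r_j^{1-\alpha}|g_j(z)|^2}{1+|g_j(z)|^2}.$$

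Since $g$ is non-constant and entire, there exists a point $z^*\in C^n$ with $g(z^*)\neq 0$; then $|g_j(z^*)|^2\to|g(z^*)|^2>0$. Because $\alpha>1$, the factor $r_j^{1-\alpha}$ tends to $\infty$, so the right-hand side at $z^*$ blows up. On the other hand, locally uniform convergence $g_j\to g$ forces the Levi forms $L_{z^*}(\log(1+|g_j|^2),v)$ to converge to $L_{z^*}(\log(1+|g|^2),v)$ uniformly in $|v|=1$, hence $g_j^\sharp(z^*)\to g^\sharp(z^*)\leq 1$. This contradicts the blow-up and completes the argument.

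The main obstacle is the correct choice of $\alpha$: the flexibility of Theorem \ref{[HZ]} over the full range $\alpha\in(-1,\infty)$ is essential, since the naive choice $\alpha=0$ only yields $g_j^\sharp(z)>\varepsilon r_j\to 0$, which is consistent with $g^\sharp\leq 1$ and produces no contradiction. Taking $\alpha>1$ is precisely what reverses the sign of the exponent $1-\alpha$ and forces the lower bound to diverge at a non-zero value of $g$.
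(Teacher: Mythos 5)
Your proposal is correct and follows essentially the same route as the paper: the paper also applies Theorem \ref{[HZ]} with the rescaling $g_j(z)=\rho_j^2 f_j(z_j+\rho_j z)$ (i.e.\ $\alpha=2$), rewrites $g_j^\sharp$ via $(1+|f|^2)f^\sharp=|Df|$ to get the lower bound $\varepsilon\rho_j^{-1}|g_j(a)|^2/(1+|g_j(a)|^2)$ at a point where $g\neq 0$, and derives the same contradiction with $g^\sharp\leq 1$. Your remark that $\alpha>1$ is the essential choice (and that $\alpha=0$ yields nothing) accurately identifies why the full range of the improved lemma is needed.
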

\begin{proof} To obtain a contradiction suppose that  $\mathcal F$  is not normal in point $z_0\in \Omega.$ Without restriction we may assume $z_0=0.$
If $\mathcal F$ is not normal at $0,$ it follows from Theorem \ref{[HZ]}  that there exist $f_j\in \mathcal F,$ $z_j\to 0,$ $\rho_j\to 0,$
 such that the  sequence $$g_j(z)=\rho_j^2\cdot f_j(z_j+\rho_j z) $$
 converges locally uniformly in $C^n$ to a non-constant entire function $g$ satisfying $g^\sharp(z)\leq g^\sharp(0)=1.$
  Since $g$ the non-constant entire function  it follows that exists $a\in \Omega$ such that $|g(a)|> 0.$ Hence $|g_j(a)|\neq 0$ for all $j$ sufficiency large
 $$ 1\geq g_j^\sharp(a)=\frac{\max_{|v|=1}|(Df_j(z_j+\rho_j a),v)|\cdot|g_j(a)|^2}{\rho_j\cdot|f_j(z_j+\rho_j a)|^2\cdot(1+|g_j(a)|^2)}\geq$$
 $$\frac{f_j^\sharp(z_j+\rho_j a)}{\rho_j}\cdot\frac{|g_j(a)|^2}{1+|g_j(a)|^2}\geq\frac{\varepsilon}{\rho_j}\cdot\frac{|g_j(a)|^2}{1+|g_j(a)|^2}.$$
 The right-hand side of this inequality tends to infinity as $j\to \infty,$ a contradiction. This contradiction shows that $\mathcal F$ is normal in $\Omega.$
\end{proof}
For families of holomorphic functions which do not vanish, we have the following theorem.
\begin{thm} \label{ZLP01} Let $\mathcal F$ be a family of zero-free holomorphic functions in a domain $\Omega \subset C^n.$
The statement of Theorem \ref{[HZ]} remains valid if   $-1\leq\alpha<\infty$ is replaced with $-\infty<\alpha< \infty.$
\end{thm}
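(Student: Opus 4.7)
The plan is to reduce Theorem \ref{ZLP01} to Theorem \ref{[HZ]} via the inversion $f \mapsto 1/f$, a move that is legitimate precisely because $\mathcal F$ consists of zero-free functions. For $\alpha \in (-1,\infty)$ Theorem \ref{[HZ]} applies directly, so the content of the extension lies entirely in the range $\alpha \leq -1$; the reverse implication ($\Leftarrow$) is unaffected, since one may simply take $\alpha = 0$ in the hypothesis and rerun the argument already given in the proof of Theorem \ref{[HZ]}.

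The engine of the reduction is the pointwise identity $(1/f)^\sharp(z) = f^\sharp(z)$, which I would verify by writing
$$\log(1+|1/f|^2) = \log(1+|f|^2) - \log|f|^2$$
and observing that $\log|f|^2 = 2\operatorname{Re}(\log f)$ is locally pluriharmonic wherever $f \neq 0$, so its Levi form vanishes. Combined with Marty's criterion (Theorem \ref{MC}), this shows that $\mathcal F$ is normal at $z_0$ if and only if $\widetilde{\mathcal F} := \{1/f : f \in \mathcal F\}$ is; in particular, if $\mathcal F$ is not normal at $z_0$, then neither is $\widetilde{\mathcal F}$.

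For $\alpha \leq -1$, I would set $\beta := -\alpha \geq 1 > -1$ and apply Theorem \ref{[HZ]} to $\widetilde{\mathcal F}$ with parameter $\beta$, obtaining $h_j = 1/f_j \in \widetilde{\mathcal F}$, $z_j \to z_0$, and $r_j \to 0$ such that
$$\widetilde g_j(z) := r_j^{\beta}\, h_j(z_j + r_j z) = \frac{r_j^{-\alpha}}{f_j(z_j + r_j z)}$$
converges locally uniformly on $C^n$ to a non-constant entire $\widetilde g$ with $\widetilde g^\sharp \leq \widetilde g^\sharp(0) = 1$. Each $\widetilde g_j$ is zero-free, so Hurwitz's theorem (Theorem \ref{[HT]}) forces $\widetilde g$ to be either everywhere zero or nowhere zero; non-constancy rules out the former, leaving $\widetilde g$ nowhere zero. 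Setting $g := 1/\widetilde g$, the uniform convergence on compacta to a nowhere-vanishing limit yields $g_j(z) := r_j^{\alpha} f_j(z_j + r_j z) = 1/\widetilde g_j(z) \to g(z)$ locally uniformly on $C^n$, and a second application of the inversion identity gives $g^\sharp = \widetilde g^\sharp$, hence $g^\sharp \leq g^\sharp(0) = 1$.

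The step I expect to require the most care is the Hurwitz invocation: this is the one place where the zero-free hypothesis on $\mathcal F$ is essential beyond the mere existence of the reciprocal family, and it is what lets us pass back from $\widetilde g$ to a genuine entire function $g = 1/\widetilde g$. Once that point is secured, the proof is a clean transfer from Theorem \ref{[HZ]}, with no new estimates or rescaling arguments required.
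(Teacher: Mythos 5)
Your proof is correct and takes essentially the same route as the paper: apply Theorem \ref{[HZ]} to the reciprocal family with parameter $-\alpha$, use Hurwitz's theorem (Theorem \ref{[HT]}) to conclude the limit is zero-free, invert, and transfer the bound via the pluriharmonicity identity $(1/g)^\sharp = g^\sharp$. The only difference is that you explicitly verify, via $(1/f)^\sharp = f^\sharp$ and Marty's criterion, that non-normality of $\mathcal F$ passes to $\{1/f : f\in\mathcal F\}$ — a step the paper leaves implicit — which is a welcome bit of added care rather than a different approach.
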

\begin{proof}[Proof of Theorem \ref{ZLP01}]

It need to consider only the case $-\infty< \alpha <0.$
Since  a family $\{1/f, f\in \mathcal F\}$ conforms to the hypotheses of Theorem \ref{ZLP01} the earlier argument shows   that
 there exist sequences $1/f_j,$ $z_j\to z_0,$ $r_j\to 0,$
 such that the sequence $$g_j(z):=\frac{r_j^\alpha}{ f_j(z_j+r_j z) }\ \ (0\leq\alpha<\infty \textrm \ \ arbitrary)$$
 converges locally uniformly in $C^n$ to a non-constant entire function $g$ satisfying $g^\sharp(z)\leq g^\sharp(0)=1.$
By  Hurwitz's theorem  either $g\equiv 0$ or $g$
never vanishes. Since $ g^\sharp(0)=1$  it is easy to see that  $g$ never vanishes then $1/g$ is entire function in $C^n.$
It follows $r_j^{-\alpha} f_j\to 1/g$ uniformly in $C^n.$
Since Levi form vanishes for any
pluriharmonic function,
$$L_z(\log(1+|1/g|^2), v)= L_z(\log(1+|g|^2), v)-2L_z(\log|g|, v)=L_z(\log(1+|g|^2), v).$$
Therefore, $$g^\sharp (z)=(1/g)^\sharp(z).$$
For every $z\in C^n$ we have $g^\sharp(z)\leq g^\sharp(0)=1,$ hence
$$(1/g)^\sharp(z)\leq (1/g)^\sharp(0)=1.$$
The case $-\infty\leq \alpha <0$ is proved.
This completes the proof of the theorem.
\end{proof}

\end{document}